\theoremstyle{plain}
\newtheorem{lemma}[subsection]{Lemma}
\newtheorem{theorem}[subsection]{Theorem}
\newtheorem{proposition}[subsection]{Proposition}
\theoremstyle{definition}
\newtheorem{example}[subsection]{Example}
\newtheorem{examples}[subsection]{Examples}
\newtheorem{definition}[subsection]{Definition}
\newcommand{\som}[2]{\bigl\langle\begin{smallmatrix} {#1} \\ {#2}\end{smallmatrix}\bigr\rangle}
\newcommand{\bigsom}[2]{\left\langle\begin{smallmatrix} {#1} \\ {#2}\end{smallmatrix}\right\rangle}
\newcommand{\somm}[3]{\left\langle\begin{smallmatrix} {#1} \\ {#2} \\ {#3}\end{smallmatrix}\right\rangle}
\newcommand{\comp}{\raisebox{0.2mm}{\ensuremath{\scriptstyle{\circ}}}}
\newcommand{\defn}{\textbf}
\newcommand{\meet}{\wedge}
\newcommand{\join}{\vee}
\newcommand{\cosmash}{\diamond}
\newcommand{\normal}{\ensuremath{\lhd}}
\newcommand{\To}{\ensuremath{\Rightarrow}}
\renewcommand{\Im}{\ensuremath{\mathrm{Im}}}
\newcommand{\Eq}{\ensuremath{\mathrm{Eq}}}
\renewcommand{\ker}{\ensuremath{\mathrm{ker}}}
\newcommand{\Ker}{\ensuremath{\mathrm{Ker}}}
\newcommand{\Smith}{\ensuremath{\mathrm{S}}}
\newcommand{\X}{\ensuremath{\mathbb{X}}}
\newcommand{\Arr}{\ensuremath{\mathsf{Arr}}}
\newcommand{\Lie}{\ensuremath{\mathsf{Lie}}}
\newcommand{\Nil}{\ensuremath{\mathsf{Nil}}}
\newcommand{\DiGp}{\ensuremath{\mathsf{DiGp}}}
\newcommand{\Gp}{\ensuremath{\mathsf{Gp}}}
\newcommand{\Loop}{\ensuremath{\mathsf{Loop}}}
\newcommand{\Pt}{\ensuremath{\mathsf{Pt}}}
\newcommand{\HSLat}{\ensuremath{\mathsf{HSLat}}}
\newcommand{\DLat}{\ensuremath{\mathsf{DLat}}}
\renewcommand{\implies}{$\Rightarrow$}
\newcommand{\ACC}{{\rm (ACC)}}
\newcommand{\C}{{\rm (C)}}
\newcommand{\barC}{{\rm (\={C})}}
\newcommand{\FWACC}{{\rm (FWACC)}}
\newcommand{\LACC}{{\rm (LACC)}}
\newcommand{\SH}{{\rm (SH)}}
\newcommand{\SSH}{{\rm (SSH)}}
\newcommand{\W}{{\rm (W)}}
\newcommand{\noproof}{\hfill \qed}
\def\pullback{% with thanks to Valerian Even
 \ar@{-}[]+R+<6pt,-1pt>;[]+RD+<6pt,-6pt>%
 \ar@{-}[]+D+<1pt,-6pt>;[]+RD+<6pt,-6pt>}
\def\pushout{%
 \ar@{-}[]+L+<-6pt,1pt>;[]+LU+<-6pt,6pt>%
 \ar@{-}[]+U+<-1pt,6pt>;[]+LU+<-6pt,6pt>}
\def\splitpullback{%
 \ar@{-}[]+R+<6pt,-.01ex>;[]+RD+<6pt,-6pt>%
 \ar@{-}[]+D+<1.01ex,-6pt>;[]+RD+<6pt,-6pt>}
 \def\doublesplitpullback{%
 \ar@{-}[]+R+<6pt,-0.51ex>;[]+RD+<6pt,-6pt>%
 \ar@{-}[]+D+<0.51ex,-6pt>;[]+RD+<6pt,-6pt>}
\def\skewpullback{%
 \ar@{-}[]+LD+<-6pt,-6pt>;[]+LDD+<-6pt,-15.5pt>%
 \ar@{-}[]+D+<-1pt,-6pt>;[]+LDD+<-6pt,-15.5pt>}
\begin{document}

\date{\today}

\author{Nelson Martins-Ferreira}
\author{Tim Van~der Linden}

\email{martins.ferreira@ipleiria.pt}
\email{tim.vanderlinden@uclouvain.be}

\address{Departamento de Matem\'atica, Escola Superior de Tecnologia e Gest\~ao, Centro para o Desenvolvimento R\'apido e Sustentado do Produto, Instituto Poli\-t\'ecnico de Leiria, Leiria, Portugal}
\address{CMUC, Department of Mathematics, University of Coimbra, 3001--454 Coimbra, Portugal}
\address{Institut de Recherche en Math\'ematique et Physique, Universit\'e catholique de Louvain, chemin du cyclotron~2 bte~L7.01.02, B-1348 Louvain-la-Neuve, Belgium}

\thanks{The first author was supported by IPLeiria/ESTG-CDRSP and Funda\c c\~ao para a Ci\^encia e a Tecnologia (grants SFRH/BPD/4321/2008, PTDC/MAT/120222/2010 and PTDC/EME-CRO/120585/2010).}
\thanks{The second author works as \emph{chercheur qualifi\'e} for Fonds de la Recherche Scientifique--FNRS. His research was supported by Centro de Matem\'atica da Universidade de Coimbra and by Funda\c c\~ao para a Ci\^encia e a Tecnologia (grants SFRH/BPD/38797/2007 and PTDC/MAT/120222/2010). He wishes to thank CMUC and IPLeiria for their kind hospitality during his stays in Coimbra and in Leiria}

\keywords{Semi-abelian, arithmetical category; Higgins, Huq, Smith, weighted commutator; fibration of points; basic fibration}

\subjclass[2010]{18D35, 18E10, 08A30, 20J15}

\title[Further remarks on the ``Smith is Huq'' condition]{Further remarks on the \\ ``Smith is Huq'' condition}

\begin{abstract}
We compare the \emph{Smith is Huq} condition~\SH\ with three commutator conditions in semi-abelian categories: first an apparently weaker condition which arose in joint work with Bourn and turns out to be equivalent with~\SH, then an apparently equivalent condition which takes commutation of non-normal subobjects into account and turns out to be stronger than~\SH. This leads to the even stronger condition that \emph{weighted commutators} in the sense of Gran, Janelidze and Ursini \emph{are independent of the chosen weight}, which is known to be false for groups but turns out to be true in any two-nilpotent semi-abelian category.
\end{abstract}

\maketitle

\section*{Introduction}

It is well known that, when they exist, the normalisations of a pair of Smith-commuting equivalence relations~\cite{Smith, Pedicchio} will always Huq-commute~\cite{Huq, BG} and that the converse need not hold: counterexamples exist in the category $\DiGp$ of digroups~\cite{Borceux-Bourn, Bourn2004} and in the category $\Loop$ of loops~\cite{HVdL}. A pointed Mal'tsev category satisfies the \defn{Smith is Huq} condition \defn{(SH)} if and only if two equivalence relations on a given object always centralise each other (= commute in the Smith sense) as soon as their normalisations commute.

The condition (SH) is fundamental in the study of internal categorical structures: it is shown in~\cite{MFVdL} that, for a semi-abelian category, this condition holds if and only if every star-multiplicative graph is an internal groupoid. As explained in~\cite{Janelidze} and in~\cite{HVdL} this is important when characterising internal crossed modules; furthermore, the condition has immediate (co)homological consequences~\cite{RVdL3}. 

Any pointed strongly protomodular exact category satisfies (SH)~\cite{BG} (in particular, so does any Moore category~\cite{Rodelo:Moore}) as well as any action accessible category~\cite{BJ07, AlanThesis} (in particular, any \emph{category of interest}~\cite{Montoli, Orzech}). Well-known concrete examples are the categories of groups, Lie algebras, associative algebras, non-unitary rings, and (pre)crossed modules of groups.

In the present paper we restrict ourselves to the context of semi-abelian categories~\cite{Janelidze-Marki-Tholen} and focus on three conditions which arose naturally in the study of \SH. In Section~\ref{Section barC} we prove that the condition
\begin{itemize}
\item[\barC] any change of base functor with respect to the fibration of points reflects the centralisation of equivalence relations
\end{itemize}
introduced in~\cite{BMFVdL} is equivalent to \SH\ as soon as the surrounding category $\X$ is semi-abelian (Theorem~\ref{Theorem barC implies C}). On the way we actually prove a stronger result, namely that under \SH\ also the change of base functors
\[
p^{*}\colon (\X\downarrow B)\to (\X\downarrow E)\qquad\text{for $p\colon E\to B$ in $\X$}
\]
with respect to the \defn{basic fibration} $\Arr(\X)\to \X$, which sends an arrow in $\X$ to its codomain, reflect the centralisation of equivalence relations.

Section~\ref{Section SH} recalls some definitions and (in Proposition~\ref{Proposition-SH}) gives an overview of conditions known to be equivalent to \SH. This is useful for the ensuing sections where those conditions are modified.

In Section~\ref{Section SSH} we explore a condition~\SSH\ which turns out to be strictly stronger than~\SH. Instead of asking that the change of base functors $p^{*}\colon{\Pt_{B}(\X)\to \Pt_{E}(\X)}$ with respect to the fibration of points $\P_{\X}\colon{\Pt(\X)\to \X}$ reflect the commutation of \emph{normal} subobjects, we shall assume that it reflects the commutation of \emph{all} subobjects, or, equivalently, of all cospans in~$\Pt_{B}(\X)$. Thus we obtain a kind of \emph{Smith is Huq} condition beyond the context where Smith commutators make sense, since a monomorphism can generally not be obtained as the normalisation of a relation. Proposition~\ref{Proposition-SSH} gives a list of properties, equivalent to \SSH. These happen to be implied by \emph{local algebraically cartesian closedness} \LACC~\cite{Bourn-Gray} so that, for instance, the category of groups satisfies it. On the other hand, we see that \SSH\ is strictly stronger than \SH, as the counterexample of Heyting semilattices (Example~\ref{Example-Lattices}) shows. This example also reveals that \emph{strong protomodularity}~\cite{Bourn2004} is not a sufficient condition for~\SSH.

The formulation of \SSH\ in terms of weighted commutators~\cite{GJU} given in Theorem~\ref{Theorem-weighted-SSH} is slightly subtle. This naturally leads to the final Section~\ref{Section W} where we look at a simplification. The resulting condition \W, which requires that \emph{weighted commutators are independent of the chosen weight}, turns out to be strictly stronger than \SSH, since even the category of groups does not satisfy it (\cite{GJU} and Example~\ref{Example Groups W}). Nevertheless any so-called \emph{two-nilpotent} semi-abelian category, such as for instance the category $\Nil_{2}(\Gp)$ of groups of nilpotency class $2$, does. The main result here is Proposition~\ref{Proposition-W} which gives equivalent conditions.

\section{A condition, equivalent to \SH}\label{Section barC}
In the article~\cite{BMFVdL}, it is shown that in a pointed Mal'tsev context, amongst the conditions of Theorem~\ref{Theorem barC implies C} below, (i) and (ii) are equivalent---this is the article's Theorem~2.1---while (ii) implies~(iii) (the article's Proposition~2.2). We show that when the surrounding category is semi-abelian, also the latter implication may be reversed.

\begin{theorem}\label{Theorem barC implies C}
In any semi-abelian category $\X$, the following conditions are equivalent:
\begin{enumerate}
\item the \emph{Smith is Huq} condition \SH;
\item condition~\C: any change of base functor with respect to the fibration of points reflects the commutation of normal subobjects;
\item condition~\barC: any change of base functor with respect to the fibration of points reflects the centralisation of equivalence relations;
\item any change of base functor with respect to the basic fibration reflects the centralisation of equivalence relations.
\end{enumerate}
\end{theorem}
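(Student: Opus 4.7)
The equivalence (i) $\Leftrightarrow$ (ii) and the implication (ii) $\Rightarrow$ (iii) are recalled from~\cite{BMFVdL}. The implication (iv) $\Rightarrow$ (iii) follows by compatibility between the basic fibration and the fibration of points: an equivalence relation on a point $(A, B, \alpha, \beta) \in \Pt_B(\X)$ is the same as an equivalence relation on the underlying arrow $\alpha \colon A \to B$ in $(\X \downarrow B)$, and centralisation is detected by the existence of a (unique) connector, so centralisation in one fibre transports to the other. Hence it suffices to establish the strengthening (i) $\Rightarrow$ (iv) advertised in the introduction together with a closing implication, such as (iii) $\Rightarrow$ (i).

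For (i) $\Rightarrow$ (iv), fix a morphism $p\colon E \to B$ in $\X$ and equivalence relations $R$, $S$ on an object $(A, f)$ of $(\X \downarrow B)$ whose pullbacks $p^*R$ and $p^*S$ centralise in $(\X \downarrow E)$. By \SH, centralisation of equivalence relations is equivalent to Huq commutation of their normalisations, and pullback commutes with the normalisation functor in any semi-abelian category. The hypothesis thus rewrites as the assertion that the normal subobjects $\N(R) \times_B E$ and $\N(S) \times_B E$ Huq-commute in $A \times_B E$. We must deduce that $\N(R)$ and $\N(S)$ Huq-commute in $A$. This is the analogue of (ii) for the basic fibration, which we obtain from (ii) itself via the functor $(A, f) \mapsto (A + B, B, [f, 1_B], \iota_B)$ from $(\X \downarrow B)$ to $\Pt_B(\X)$: normal subobjects of $(A, f)$ correspond to normal subobjects of the associated point, Huq commutation is preserved and reflected, and base change along $p$ commutes with the construction. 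A final application of \SH\ returns the centralisation of $R$ and $S$.

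For (iii) $\Rightarrow$ (i), take a pair of equivalence relations $R$, $S$ on an object $X$ whose normalisations Huq-commute. The plan is to lift $R$ and $S$ to equivalence relations on a suitable split epimorphism, pull back along a judiciously chosen morphism $p\colon E \to B$, verify that the Huq commutation of the normalisations forces centralisation of the pulled-back relations in $\Pt_E(\X)$, and then invoke (iii) to transport this centralisation back to $\Pt_B(\X)$, and hence to $\X$. A natural candidate is to work with the split epimorphism $\pi_1 \colon X \times X \to X$ equipped with its diagonal section and to pull back along the zero map $0 \to X$ in $\X$.

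The main obstacle is the transfer in (i) $\Rightarrow$ (iv) from the fibration of points to the basic fibration: the fibres $(\X \downarrow B)$ are strictly larger than $\Pt_B(\X)$, so one must carefully check that the free-point adjunction $(A, f) \leftrightarrow (A+B, B, [f, 1_B], \iota_B)$ reflects Huq commutation of normal subobjects and intertwines change of base along an arbitrary $p\colon E \to B$. A secondary subtlety, for (iii) $\Rightarrow$ (i), is to avoid an implicit use of \SH: centralisation of equivalence relations becomes equivalent to Huq commutation of normalisations only once \SH\ is known, so the reduction to the hypothesis of (iii) must be arranged directly from the pullback reflection.
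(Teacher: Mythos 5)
There are two genuine gaps. The first is in your argument for (i)~\implies~(iv). You reduce it to an ``analogue of (ii) for the basic fibration'' obtained from (ii) via the free-point functor $(A,f)\mapsto (A+B,B,\som{f}{1_{B}},\iota_{B})$, claiming that this construction intertwines the change-of-base functors of the two fibrations and that normal subobjects correspond. Neither claim holds in a general semi-abelian category: the intertwining would force the pullback functor $p^{*}\colon\Pt_{B}(\X)\to\Pt_{E}(\X)$ to carry $A+B$ to $(A\times_{B}E)+E$, i.e.\ to preserve binary coproducts, which is essentially the algebraic exponentiability hypothesis that Section~3 of the paper invokes only to obtain the \emph{strictly stronger} condition \SSH; and a subobject $K\normal A$ is in general not normal in $A+B$, so condition (ii) --- which reflects commutation of \emph{normal} subobjects only --- cannot be applied to the transported cospan. (This step is in fact repairable by a much more direct observation: the normalisations of $R$ and $S$ land in $\Ker(f)$, which is unchanged by base change, so their Huq commutation before and after pulling back is literally the same condition, and one application of \SH\ in $\X$ finishes; but that is not what you wrote.) The paper relates the two fibrations quite differently, by taking inverse images of $R$ and $S$ along the kernel-pair projection $f_{1}\colon\Eq(f)\to X$ --- a split epimorphism, hence an object of $\Pt_{X}(\X)$ --- and recovering $R$ and $S$ afterwards as regular images along $f_{1}$ via \cite[Proposition~2.6.15]{Borceux-Bourn}; no coproducts are involved.

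The second and more serious gap is that the closing implication is missing. You only sketch a ``plan'' for (iii)~\implies~(i) and explicitly acknowledge, without resolving it, the circularity of translating centralisation into Huq commutation of normalisations before \SH\ is available; your candidate ($\pi_{1}\colon X\times X\to X$ pulled back along $0\to X$) merely reproduces the original problem in the fibre. This is exactly where the paper's key idea lives: it closes the cycle with (iv)~\implies~(i) by quotienting $X$ by the join $K\join L$ of the two normalisations, so that the change of base of the basic fibration along $!_{Y}\colon 0\to Y=X/(K\join L)$ sends $X$ to precisely $K\join L=\Ker(f)$, and then invoking the Everaert--Gran result \cite[Proposition~4.6]{EverVdLRCT} that two equivalence relations centralise whenever their normalisations commute \emph{and their join is the whole object}. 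Without this, or some equivalent device, the equivalence of (iii) and (iv) with \SH\ is not established.
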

\begin{proof}
To see that (iii) implies~(iv), consider equivalence relations~$R$ and~$S$ on an object $f\colon{X\to Y}$ of $(\X\downarrow Y)$ such that their pullbacks $y^{*}(R)$ and~$y^{*}(S)$ along an arrow $y\colon Y'\to Y$ centralise each other in $(\X\downarrow Y')$.
\[
\xymatrix{\Eq(f') \splitpullback \ar[r] \ar@<-1ex>[d]_-{f'_{1}} \ar@<1ex>[d]^-{f'_{0}} & \Eq(f) \ar@<-1ex>[d]_-{f_{1}} \ar@<1ex>[d]^-{f_{0}}\\
X' \pullback \ar[d]_-{f'} \ar[u] \ar[r]^-{x} & X \ar[u] \ar[d]^-{f} \\
Y' \ar[r]_-{y} & Y}
\]
We write $T$ and $U$ for the inverse images of $R$ and $S$ along the kernel pair projection~$f_{1}$, considered as equivalence relations in $\Pt_{X}(\X)$. Then their pullbacks~$x^{*}(T)$ and $x^{*}(U)$ coincide with the inverse images of $y^{*}(R)$ and~$y^{*}(S)$ along~$f_{1}'$. The equivalence relations $x^{*}(T)$ and $x^{*}(U)$ centralise each other in $\Pt_{X'}(\X)$ by left exactness of pullback functors and by the assumption that $y^{*}(R)$ and~$y^{*}(S)$ centralise each other. Condition~(iii) now implies that $T$ and $U$ centralise each other in~$\Pt_{X}(\X)$. As a consequence of \cite[Proposition~2.6.15]{Borceux-Bourn}, the regular images~$R=f_{1}(T)$ and~$S=f_{1}(U)$ centralise each other in~$\X$ and thus also in~$(\X\downarrow Y)$.

We now prove the implication (iv)~\implies~(i). Let $R$ and $S$ be equivalence relations on an object $X$ of $\X$ and $K$, $L\normal X$ their respective normalisations in $\X$. Suppose that~$K$ and~$L$ commute. Let $Y$ be the quotient $X/(K\join L)$ of $X$ by the join $K\join L$ of~$K$ and $L$. Then the quotient map $f\colon {X\to Y}$ makes $R$ and $S$ equivalence relations in~${(\X\downarrow Y)}$. Let $!_{Y}\colon {0\to Y}$ denote the unique morphism.
\[
\xymatrix{K\join L \pullback \ar[d] \ar@{{ |>}->}[r] & X \ar@{-{ >>}}[d]^-{f} \\
0 \ar[r]_-{!_{Y}} & Y}
\]
Since $K$ and $L$ still commute in $K\vee L=\Ker(f)=!_{Y}^{*}(X)$, a result due to Tomas Everaert and Marino Gran (published as Proposition~4.6 in~\cite{EverVdLRCT}) implies that the equivalence relations~$!_{Y}^{*}(R)$ and $!_{Y}^{*}(S)$ centralise each other in the semi-abelian category ${(\X\downarrow 0)}$. Condition~(iv) now tells us that~$R$ and $S$ centralise each other in~$(\X\downarrow Y)$ and thus also in~$\X$. 
\end{proof}

Note that outside the Barr exact context, the current proof of (iv)~\implies~(i) fails, because there a join of normal monomorphisms need no longer be normal. Using a different technique, in his recent article~\cite{BournCahiers2013}, Bourn extends Theorem~\ref{Theorem barC implies C} to regular Mal'tsev categories.

\section{Further equivalent conditions}\label{Section SH}

Let us now recall some further concepts and equivalent conditions which will be used in the following sections.

\subsection{Huq-commuting arrows in a category of points}
When
\begin{equation}\label{cospan-points}
\vcenter{\xymatrix@!0@=5em{A \ar@<-.5ex>[rd]_(.6){f} \ar[r]^-{\alpha} & D \ar@<-.5ex>[d]_(.4){p}
 & C \ar@<.5ex>[dl]^(.6){g} \ar[l]_-{\gamma}\\
& B \ar@<-.5ex>[u]_(.6){\beta} \ar@<-.5ex>[ul]_(.4){r}
\ar@<.5ex>[ur]^(.4){s}}}
\end{equation}
is a cospan in $\Pt_{B}(\X)$, it is easily seen that $\alpha$ and $\gamma$ Huq-commute if and only if there exists a morphism $\varphi\colon{A\times_{B}C\to D}$ in $\X$ such that $\varphi \comp e_{1}=\alpha$ and $\varphi \comp e_{2}=\gamma$. Here $e_{1}=\langle1_A,s\comp f \rangle$ and $e_{2}=\langle r\comp g,1_C \rangle$ are the morphisms in the pullback 
\[
\xymatrix@!0@=5em{ A\times_{B}C \doublesplitpullback \ar@<-.5ex>[r]_(.75){\pi_2}
\ar@<.5ex>[d]^-{\pi_1} & C \ar@<-.5ex>[l]_-{e_2} \ar@<-.5ex>[d]_-{g}
 \\
A
 \ar@<.5ex>[r]^-{f} \ar@<.5ex>[u]^-{e_1}
& B
 \ar@<.5ex>[l]^-{r} \ar@<-.5ex>[u]_-{s}}
\]
induced by the sections $r$ and $s$, respectively. 

In other words, $\alpha$ and $\gamma$ commute in $\Pt_{B}(\X)$ precisely when the triple $(\alpha,\beta,\gamma)$ is \defn{admissible with respect to $(f,r,g,s)$} in the sense of~\cite{MFVdL4} and the first author's thesis~\cite{MF-PhD}. These data are usually pictured in the shape of a diagram 
\begin{equation}\label{adm}
\vcenter{\xymatrix@!0@=4em{A \ar@<.5ex>[r]^-{f} \ar[rd]_-{\alpha} & B
\ar@<.5ex>[l]^-{r}
\ar@<-.5ex>[r]_-{s}
\ar[d]^-{\beta} & C \ar@<-.5ex>[l]_-{g} \ar[ld]^-{\gamma}\\
& D}}
\end{equation}
where $f\comp r=1_B=g\comp s$ and $\alpha\comp r=\beta=\gamma\comp s$ .

\subsection{Weighted commutation}\label{weighted commutation}
Recall from~\cite{GJU} that a \defn{weighted cospan} $(x,y,w)$ in $\X$ is a diagram
\begin{equation}\label{weighted cospan}
\vcenter{\xymatrix@!0@=4em{& W \ar[d]^-{w} \\
X \ar[r]_-{x} & D & Y. \ar[l]^-{y}}
}\end{equation}
It was shown in~\cite{MFVdL4} that the morphisms $x$ and $y$ \defn{commute over $w$} in the sense of~\cite{GJU} if and only if the triple $\bigl(\som{w}{x},w,\som{w}{y}\bigr)$ is admissible with respect to the quadruple $\bigl(\som{1_{W}}{0},\iota_{W},\som{1_{W}}{0},\iota_{W}\bigr)$. Reformulating this in terms of points, we see that the cospan
\begin{equation}\label{cospan-weight}
\vcenter{\xymatrix@!0@=7em{ W+X \ar@<-.5ex>[rd]_(.7){\som{1_{W}}{0}} \ar[r]^-{\left\langle\begin{smallmatrix} 1_{W} & 0\\
w & x\end{smallmatrix}\right\rangle} & W\times D \ar@<-.5ex>[d]_(.3){\pi_{W}}
 & W+Y \ar@<.5ex>[dl]^(.7){\som{1_{W}}{0}} \ar[l]_-{\left\langle\begin{smallmatrix} 1_{W} & 0\\
w & y\end{smallmatrix}\right\rangle}\\
& W \ar@<-.5ex>[u]_(.7){\langle1_{W},0\rangle} \ar@<-.5ex>[ul]_(.3){\iota_{W}}
\ar@<.5ex>[ur]^(.3){\iota_{W}}}}
\end{equation}
Huq-commutes in $\Pt_{W}(\X)$ if and only if $x$ and $y$ commute over $w$.

Conversely, we may view admissibility in terms of weighted commutativity as follows: again by~\cite{MFVdL4}, given a diagram such as~\eqref{adm}, it will be admissible if and only if~${x=\alpha\comp \ker(f)}$ and $y=\gamma\comp \ker(g)$ commute over $w=\beta\colon {W=B\to D}$. 

\subsection{Higgins commutators} 
By Theorem~1 in~\cite{MFVdL4}, for \defn{proper} morphisms $x$ and~$y$---which means that their images are normal subobjects~\cite{Bourn2001}---this commutation of $x$ and $y$ over $w$ may be reformulated in terms of Higgins commutators as the condition that both commutators
\[
[\Im(x),\Im(y)] \qquad \text{and}\qquad[\Im(x),\Im(y),\Im(w)]
\]
are trivial. Let us explain how to read this by recalling the needed definitions from~\cite{Actions, MM-NC, HVdL}. 

If $k\colon K \to D$ and $l\colon L \to D$ are subobjects of an object $D$, then the \defn{(Higgins) commutator} ${[K,L]\leq D}$ is the image of the induced morphism
\[
\xymatrix@=2em{K\cosmash L \ar@{{ |>}->}[r]^-{\iota_{K,L}} & K+L \ar[r]^-{\bigsom{k}{l}} & D,}
\]
where
\[
K\cosmash L=\Ker\bigl(\left\langle\begin{smallmatrix} 1_{K} & 0 \\
0 & 1_{L}\end{smallmatrix}\right\rangle
\colon K+L\to K\times L\bigr).
\]
It is easily seen that $k$ and $l$ Huq-commute if and only if $[K,L]$ vanishes.

If also $m\colon M \to D$ is a subobject of $D$, then we may define the \defn{ternary commutator} ${[K,L,M]\leq D}$ as the image of the composite
\[
\xymatrix@=3em{K\cosmash L\cosmash M \ar@{{ |>}->}[r]^-{\iota_{K,L,M}} & K+L+M \ar[r]^-{\somm{k}{l}{m}} & D,}
\]
where $\iota_{K,L,M}$ is the kernel of the morphism
\[
\xymatrix@=8em{K+L+M \ar[r]^-{\left\langle\begin{smallmatrix} i_{K} & i_{K} & 0 \\
i_{L} & 0 & i_{L}\\
0 & i_{M} & i_{M}\end{smallmatrix}\right\rangle} & (K+L)\times (K+M) \times (L+M).}
\]
The objects $K\cosmash L$ and $K\cosmash L\cosmash M$ are \emph{co-smash products} in the sense of~\cite{Smash}.

A key result here is the decomposition formula for the Higgins commutator of a join of two subobjects~\cite{Actions, HVdL}:
\[
[K,L\join M]=[K,L]\join [K,M]\join[K,L,M].
\]
This result is used in~\cite{HVdL, MFVdL4} to prove that when $k$ and $l$ are normal monomorphisms, they commute over $m$ if and only if
\[
[K,L]=0=[K,L,M].
\]

\subsection{The equivalent conditions}\label{def proper cospan}
We shall say that a cospan $(x,y)$
\[
\xymatrix{X \ar[r]^-{x} & D & Y \ar[l]_-{y}} 
\]
is \defn{proper} when $x$ and $y$ are proper (= have normal images). The following proposition gives an overview of conditions which are known to be equivalent to~\SH\ and which will be used later on.

\begin{proposition}\label{Proposition-SH}
In any semi-abelian category $\X$, the following conditions are equivalent:
\begin{enumerate}
\item the \emph{Smith is Huq} condition \SH;
\item for every morphism $p\colon {E\to B}$ in $\X$, the pullback functor
\[
p^{*}\colon{\Pt_{B}(\X)\to \Pt_{E}(\X)}
\]
reflects Huq-commutativity of proper cospans;
\item for every object $B$ of $\X$, the kernel functor $\Ker\colon\Pt_{B}(\X)\to\X$ reflects Huq-commutativity of proper cospans;
\item for any diagram such as~\eqref{adm} above in which $\alpha$ and $\gamma$ are regular epimorphisms, if $\alpha\comp \ker(f)$ and $\gamma\comp \ker(g)$ commute in~$D$, then there exists a morphism $\varphi\colon{A\times_{B}C\to D}$ such that $\varphi \comp e_{1}=\alpha$ and $\varphi \comp e_{2}=\gamma$;
\item for any weighted cospan $(x,y,w)$ such that $(x,y)$ is proper, the morphisms $x$ and $y$ commute over $w$ as soon as they Huq-commute;
\item for any given proper cospan $(x,y)$, the property of commuting over $w$ is independent of the chosen weight $w$ making $(x,y,w)$ a weighted cospan;
\item for any weighted cospan $(x,y,w)$ such that $(x,y)$ is proper,
\[
[\Im(x),\Im(y)]=0 \qquad\To\qquad [\Im(x),\Im(y),\Im(w)]=0.
\]
\end{enumerate}
\end{proposition}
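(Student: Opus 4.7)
The plan is to establish the seven equivalences in linked blocks, using Theorem~\ref{Theorem barC implies C} and the reformulations of Sections~2.1--2.3 as bridges. For (i)$\Leftrightarrow$(ii)$\Leftrightarrow$(iii), I would invoke Theorem~\ref{Theorem barC implies C} to get (i)$\Leftrightarrow$\C, combined with the standard fact that two morphisms Huq-commute if and only if their (normal) images do; this turns reflection of commutation of proper cospans into reflection of commutation of normal subobjects, which is (ii)$\Leftrightarrow$\C. For (ii)$\Leftrightarrow$(iii): (iii) is the case $E=0$ of (ii), since the kernel functor $\Pt_{B}(\X)\to\X$ is the change of base along $0\to B$; conversely, because pullbacks preserve kernels and the kernel functor on $\Pt_{E}(\X)$ preserves Huq-commutation witnesses (sending the pullback $p^{*}(A)\times_{E}p^{*}(C)$ to $\Ker(\alpha)\times\Ker(\gamma)$), a witness in $\Pt_{E}(\X)$ descends to one for $\Ker(\alpha)$ and $\Ker(\gamma)$ in $\X$, which (iii) then lifts back to $\Pt_{B}(\X)$.

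The equivalence (iii)$\Leftrightarrow$(iv) is a direct unravelling via Section~2.1: the existence of $\varphi$ in~(iv) is Huq-commutation of $\alpha$ and $\gamma$ in $\Pt_{B}(\X)$, while the hypothesis that $\alpha\comp\ker(f)$ and $\gamma\comp\ker(g)$ commute in $D$ is Huq-commutation of $\Ker(\alpha)$ and $\Ker(\gamma)$ in $\X$, since both morphisms factor through the normal monomorphism $\ker(p)\colon\Ker(p)\hookrightarrow D$. Any regular-epi cospan is proper, so (iv) is the regular-epi instance of~(iii); conversely, a general proper cospan reduces to this case via the regular-epi/normal-mono factorisations of $\alpha$ and $\gamma$. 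For (v)$\Leftrightarrow$(vi)$\Leftrightarrow$(vii), I would appeal to Theorem~1 of~\cite{MFVdL4}: a proper weighted cospan $(x,y,w)$ satisfies that $x$ and $y$ commute over $w$ precisely when $[\Im(x),\Im(y)]=0$ and $[\Im(x),\Im(y),\Im(w)]=0$. Since Huq-commutation of the pair is the first identity, (v) becomes literally~(vii); and taking $w=0$ makes the ternary term trivial (the co-smash product with~$0$ vanishes), so commutation over $w=0$ reduces to Huq-commutation, whence (vi) is equivalent to~(v).

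Finally, the bridge between the two blocks comes from the closing remark of Section~2.2: a diagram such as~\eqref{adm} is admissible if and only if $\alpha\comp\ker(f)$ and $\gamma\comp\ker(g)$ commute over $\beta$. This identifies (iv) as the instance of~(v) arising from a regular-epi admissibility diagram (where the cospan is automatically proper), giving (v)$\Rightarrow$(iv) at once. In the opposite direction, a general proper weighted cospan $(x,y,w)$ may be realised as coming from such a diagram with $B=W$ and $\beta=w$, via coproduct/split-extension constructions supplying the required $A, C, f, g$. I expect this realisation to be the main obstacle: one has to track images and kernels carefully through the reformulations, exploiting the specific semi-abelian features (kernels of coproducts, behaviour of regular epis under pullback), but everything rests on the principle that weighted commutation in $\X$ is exactly Huq-commutation in a category of points over the weight.
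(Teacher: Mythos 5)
Your handling of the two blocks matches the paper's own (very terse) proof: the paper also gets (i)$\Leftrightarrow$(ii) from Theorem~\ref{Theorem barC implies C} together with the fact that morphisms Huq-commute iff their images do, obtains (ii)$\Leftrightarrow$(iii)$\Leftrightarrow$(iv) as ``variations on'' the kernel-functor arguments of Proposition~\ref{Proposition-SSH} (the same $!_{E}^{*}\comp p^{*}=\Ker$ trick and the same unravelling of admissibility that you describe), and derives (v)$\Leftrightarrow$(vi)$\Leftrightarrow$(vii) exactly as you do, from the decomposition $[\Im(x),\Im(y)]=0=[\Im(x),\Im(y),\Im(w)]$ of weighted commutation for proper cospans and the fact that commuting over $0$ is Huq-commuting. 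Where you diverge is the bridge between the blocks: the paper simply cites the known equivalence (i)$\Leftrightarrow$(vi) from~\cite{GJU, MFVdL4}, whereas you attempt a direct proof of (iv)$\Leftrightarrow$(v).

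In that bridge, (v)$\Rightarrow$(iv) is fine, but the converse --- which you yourself flag as ``the main obstacle'' --- is a genuine gap, and your chosen entry point makes it worse. The canonical realisation of a weighted cospan $(x,y,w)$ as a diagram~\eqref{adm} is the cospan~\eqref{cospan-weight} in $\Pt_{W}(\X)$, where $\alpha=\som{w}{x}\colon{W+X\to D}$ and $\gamma=\som{w}{y}\colon{W+Y\to D}$; these are \emph{not} regular epimorphisms in general, so condition~(iv) cannot be applied to this diagram at all --- one must pass through (ii) or (iii) instead. The substantive step you leave unproved is then the identification of the kernel-functor image of this realised cospan: one needs that $\som{w}{x}\comp\kappa_{W,X}\colon{W\flat X\to D}$ has image the $w$--normal closure $[\Im(w),\Im(x)]\join\Im(x)$, which equals $\Im(x)$ because a proper cospan is $w$-proper for every $w$ (as $[\Im(w),\Im(x)]\leq[D,\Im(x)]\leq\Im(x)$), whence $\som{w}{x}\comp\kappa_{W,X}$ and $\som{w}{y}\comp\kappa_{W,Y}$ Huq-commute iff $x$ and $y$ do. This is precisely the content of Lemmas~\ref{Lemma Normal Closure} and~\ref{Lemma w-Normal Cospan}; without them (or the external citation the paper uses), the implication from the point-theoretic conditions to (v) is not established.
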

\begin{proof}
The equivalence between (i) and (ii) is part of Theorem~\ref{Theorem barC implies C}. Variations on the equivalences between (ii), (iii) and (iv) are dealt with in Proposition~\ref{Proposition-SSH} below, and those between (v), (vi) and (vii) in Theorem~\ref{Theorem-weighted-SSH}. The assumption in~(iv)  that $\alpha$ and $\gamma$ are regular epimorphisms is there to ensure that $\alpha\comp \ker(f)$ and $\gamma\comp \ker(g)$ are proper. We are left with recalling that also the equivalence between (i) and (vi) is known~\cite{GJU, MFVdL4}. 
\end{proof}

\section{The stronger condition \SSH}\label{Section SSH}
The characterisation of the \emph{Smith is Huq} condition in terms of the fibration of points obtained in~\cite{BMFVdL} and recalled above immediately leads to a stronger condition (which we shall denote \SSH, even though it does not involve Smith commutators) and to the question whether or not the conditions \SH\ and \SSH\ are equivalent to each other. Indeed, by Theorem~\ref{Theorem barC implies C} we may write
\begin{itemize}
\item[\SH] any change of base functor with respect to the fibration of points reflects the commutation of normal subobjects;
\item[\SSH] any change of base functor with respect to the fibration of points reflects commutation (of arbitrary pairs of arrows)
\end{itemize}
and clearly \SSH\ \implies~\SH. In this section we give an overview of conditions which characterise \SSH, together with some examples (of categories which satisfy \SSH) and a counterexample (showing that \SSH\ is strictly stronger than \SH). Since they need some more work,  the conditions involving weighted commutativity are treated only later on in the section, starting from~\ref{Subsection (W)}.

\begin{proposition}\label{Proposition-SSH}
In any semi-abelian category $\X$, the following conditions are equivalent:
\begin{enumerate}
\item for every morphism $p\colon {E\to B}$ in $\X$, the pullback functor
\[
p^{*}\colon{\Pt_{B}(\X)\to \Pt_{E}(\X)}
\]
reflects Huq-commutativity of arbitrary cospans;
\item for every object $B$ of $\X$, the kernel functor $\Ker\colon\Pt_{B}(\X)\to\X$ reflects Huq-commutativity of arbitrary cospans;
\item for any diagram such as~\eqref{adm} above, if $\alpha\comp \ker(f)$ and $\gamma\comp \ker(g)$ commute in~$D$, then there exists a morphism $\varphi\colon{A\times_{B}C\to D}$ such that $\varphi \comp e_{1}=\alpha$ and $\varphi \comp e_{2}=\gamma$;
\item condition {\rm (i)} or condition {\rm (ii)}, restricted to cospans of monomorphisms.
\end{enumerate}
\end{proposition}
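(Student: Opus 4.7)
The plan is to mimic the strategy of the companion Proposition~\ref{Proposition-SH}, now without the properness restriction, so working with arbitrary cospans (or cospans of monomorphisms).

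First I would establish (i) $\Leftrightarrow$ (ii). One direction is formal: (ii) is the instance of (i) obtained by taking $p$ to be the unique morphism $!_{B}\colon 0\to B$, as then $\Pt_{0}(\X)\simeq \X$ and $p^{*}$ is naturally isomorphic to the kernel functor $\Ker\colon\Pt_{B}(\X)\to \X$. For the converse, given a cospan $\alpha,\gamma$ in $\Pt_{B}(\X)$ whose $p^{*}$-images Huq-commute in $\Pt_{E}(\X)$, I would apply the kernel functor $\Pt_{E}(\X)\to \X$ (which preserves Huq-commutativity, as does any finite-limit preserving functor) and observe that kernels are computed by pulling back further to $0$, so $\Ker(p^{*}\alpha)$ and $\Ker(p^{*}\gamma)$ coincide with $\Ker(\alpha)$ and $\Ker(\gamma)$; the latter then commute in $\X$, and condition~(ii) at $B$ concludes.

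Next, I would treat (ii) $\Leftrightarrow$ (iii) by unwrapping definitions. From the discussion around diagram~\eqref{cospan-points}, Huq-commutativity of $\alpha$ and $\gamma$ in $\Pt_{B}(\X)$ is precisely the existence of the comparison $\varphi\colon A\times_{B}C\to D$ featured in~(iii). Commutativity of the kernels $\Ker(\alpha)$ and $\Ker(\gamma)$ in $\X$ should turn out to be equivalent to the commutativity of $\alpha\comp\ker(f)$ and $\gamma\comp\ker(g)$ in $D$: the easy direction is post-composition with $\ker(p)$; for the converse I would use that any morphism $\Ker f\times\Ker g\to D$ whose composites with the cooperators vanish after post-composition with $p$ must factor through $\ker(p)$, since in any unital (in particular, semi-abelian) category the cooperators are jointly strongly epimorphic.

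For (i) $\Leftrightarrow$ (iv), one direction is trivial. For (iv) $\Rightarrow$ (i), given an arbitrary cospan $(\alpha,\gamma)$ in $\Pt_{B}(\X)$, I would consider the regular image factorisations $\alpha=m_{\alpha}\comp e_{\alpha}$ and $\gamma=m_{\gamma}\comp e_{\gamma}$ in the regular category $\Pt_{B}(\X)$. Since regular epimorphisms are stable under pullback, $p^{*}$ will transport these to the image factorisations of $p^{*}\alpha$ and $p^{*}\gamma$. Combining this with the standard fact that, in a semi-abelian category, Huq-commutativity of a pair of morphisms depends only on their images---essentially because the co-smash product $A\cosmash C$ surjects onto $\Im(\alpha)\cosmash\Im(\gamma)$---the reflection of Huq-commutativity for arbitrary cospans reduces to the case of cospans of monomorphisms. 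The main obstacle is precisely this last reduction: although conceptually natural, it requires combining the pullback-stability of regular epimorphisms with the (mild but technical) compatibility of Huq-commutativity with image factorisation.
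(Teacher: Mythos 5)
Your handling of (i) $\Leftrightarrow$ (ii) coincides with the paper's: the factorisation $!_{E}^{*}\comp p^{*}=!_{B}^{*}=\Ker$, together with the fact that the kernel functor $!_{E}^{*}$ \emph{preserves} cooperators, reduces (i) to (ii). Likewise your reduction to monomorphisms in (iv) is the paper's one-line argument (two arrows Huq-commute if and only if their regular images do, \cite[Proposition~1.6.4]{Borceux-Bourn}, combined with pullback-stability of regular epimorphisms, so that $p^{*}$ preserves image factorisations).

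The gap is in (ii) $\Rightarrow$ (iii). Condition (iii) concerns an \emph{arbitrary} diagram~\eqref{adm}: there the morphism $\beta\colon{B\to D}$ is not assumed to admit a retraction, so $D$ carries no point structure over $B$ and $(\alpha,\gamma)$ is \emph{not} a cospan in $\Pt_{B}(\X)$. Your ``unwrapping of definitions''---in particular your use of $p$ and $\ker(p)$---implicitly places you in the split situation of~\eqref{cospan-points}, and in that special case (iii) is just a restatement of (ii); this gives (iii) $\Rightarrow$ (ii) (modulo the routine check, which you omit, that the $\varphi$ produced by (iii) is compatible with the projections to $B$ and the sections from $B$, hence is a cooperator in $\Pt_{B}(\X)$), but it does not prove the genuinely stronger statement (iii). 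The missing idea is the paper's rewriting of~\eqref{adm} as the cospan $(\langle\alpha,f\rangle,\langle\gamma,g\rangle)$ with codomain the point $(D\times B,\pi_{B},\langle\beta,1_{B}\rangle)$ over $B$: the kernel of $\pi_{B}$ is $D$, the kernel functor sends this cospan to $(\alpha\comp\ker(f),\gamma\comp\ker(g))$, so condition (ii) produces a cooperator ${A\times_{B}C\to D\times B}$ in $\Pt_{B}(\X)$ whose composite with $\pi_{D}$ is the required $\varphi$. Without this (or an equivalent device) the equivalence of (iii) with the other conditions is not established.
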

\begin{proof}
Condition~(ii) is the special case of~(i) where $p$ is~$!_{B}\colon{0\to B}$. The following standard trick shows (ii)~\implies~(i). For any $p\colon{E\to B}$ we have the induced inverse image functors
\[
\xymatrix{\Pt_{B}(\X) \ar[r]^-{p^{*}} & \Pt_{E}(\X) \ar[r]^-{!_{E}^{*}} & \Pt_{0}(\X)\cong \X.}
\]
Clearly $!_{E}^{*}\comp p^{*}=!_{B}^{*}=\Ker$. By assumption, this functor reflects Huq-com\-mu\-ta\-ti\-vi\-ty. But the kernel functor $!_{E}^{*}$ also \emph{preserves} Huq-commutating pairs of morphisms, and these two properties together give us~(i).

Condition~(ii) is the special case of (iii) where in the diagram~\eqref{adm} induced by~\eqref{cospan-points}, the arrow $\beta$ has a retraction $p$. We only need to check that the morphism~$\varphi$ induced by~(iii) is a morphism of points, of which the domain is $\overline{p}=f\comp \pi_{A}=g\comp \pi_{B}\colon {A\times_{B}C\to B}$ with section $\overline{\beta}=e_{1}\comp r=e_{2}\comp s\colon{B\to A\times_{B}C}$. Now
\[
p\comp \varphi \comp e_{1}=p\comp\alpha=f=f\comp \pi_{A}\comp e_{1}=\overline{p}\comp e_{1}
\]
and, similarly, $p\comp \varphi \comp e_{2}=\overline{p}\comp e_{2}$, so that $\overline{p}=p\comp\varphi$. Furthermore, $\varphi\comp \overline{\beta}=\varphi\comp e_{1}\comp r=\alpha\comp r=\beta$.

For the converse, it suffices to rewrite Diagram~\eqref{adm} in the shape
\[
\vcenter{\xymatrix@!0@=5em{A \ar@<-.5ex>[rd]_(.7){f} \ar[r]^-{\langle\alpha,f\rangle} & D\times B \ar@<-.5ex>[d]_(.3){\pi_{B}}
 & C \ar@<.5ex>[dl]^(.7){g} \ar[l]_-{\langle\gamma,g\rangle}\\
& B \ar@<-.5ex>[u]_(.7){\langle\beta,1_{B}\rangle} \ar@<-.5ex>[ul]_(.3){r}
\ar@<.5ex>[ur]^(.3){s}}}
\]
and consider it as a cospan $(\langle\alpha,f\rangle, \langle\gamma,g\rangle)$ in $\Pt_{B}(\X)$. So condition~(iii) is an instance of the kernel functors reflecting Huq-commutativity in the case where the point which is the codomain of the cospan is a product.

Finally, we can restrict (i) and (ii) to cospans of monomorphisms, because two arrows commute if and only if their regular images do~\cite[Proposition~1.6.4]{Borceux-Bourn}.
\end{proof}

\begin{definition}
We say that $\X$ satisfies \defn{condition \SSH} when it satisfies the equivalent conditions of Proposition~\ref{Proposition-SSH}.
\end{definition}

It is clear that \SSH\ implies \SH, the difference between the two essentially being that the former works for \emph{all} subobjects where the latter uses \emph{normal} subobjects. To find examples of categories where \SSH\ holds we may rely on the following:

\subsection{Even stronger conditions}
It is shown in~\cite{Bourn-Gray} that if a morphism~$p$ in a protomodular category is \emph{algebraically exponentiable} (that is, the pullback functor $p^*\colon{\Pt_B(\X)\to \Pt_{E}(\X)}$ has a right adjoint) then~$p^*$ reflects commuting pairs of arrows. More precisely, this means that if a category is pointed protomodular and one of the conditions
\begin{enumerate}
\item $\X$ is \emph{algebraically cartesian closed} \ACC
\item $\X$ is \emph{fibre-wise algebraically cartesian closed} \FWACC
\item $\X$ is \emph{locally algebraically cartesian closed} \LACC 
\end{enumerate}
holds then, respectively,
\begin{enumerate}
\item $(B\to 0)^*$ for each~$B$
\item pullback functors along effective descent morphisms
\item all pullback functors
\end{enumerate}
reflect commuting pairs. Hence \LACC\ implies \SSH\ via~(i) in Proposition~\ref{Proposition-SSH}.

\begin{examples}
The categories $\Gp$ of groups and $\Lie_{R}$ of Lie algebras over a commutative ring $R$ are \LACC, hence satisfy \SSH. 
\end{examples}

Another class of examples will be treated in Section~\ref{Section W}. It is easy (and illustrative, we believe) to make the groups case explicit:

\begin{example}\label{Example-Groups}
We show that when $\X$ is the category $\Gp$ of groups, the normalisation functor does reflect admissibility for all
diagrams~\eqref{adm}. In what follows we use additive notation, also for non-abelian groups. Consider in $\Gp$ the diagram
\[
\vcenter{\xymatrix@!0@=4em{X \ar@<.5ex>[r]^-{k} & A \ar@<.5ex>@{.>}[l]^-{k'} \ar@<.5ex>[r]^-{f} \ar[rd]_-{\alpha} & B \ar@<.5ex>[l]^-{r} \ar@<-.5ex>[r]_-{s}
\ar[d]^-{\beta} & C \ar@<-.5ex>[l]_-{g} \ar[ld]^-{\gamma} \ar@<-.5ex>@{.>}[r]_-{l'} & Y \ar@<-.5ex>[l]_-{l} \\
&& D}}
\]
in which $k=\ker (f)$, $l=\ker(g)$ and where $k'$, $l'$ are the unique functions (not homomorphisms) with the property that $kk'=1_{A}-rf$ and $ll'=-sg+1_{C}$ (so that $k'k=1_{X}$ and $l'l=1_{Y}$). Note that
\[
a=kk'(a)+rf(a)\qquad\text{and}\qquad c=sg(c)+ll'(c)
\]
for all $a\in A$, $c\in C$.

Assuming that $\alpha k$ and $\gamma l$ commute, we have to construct a suitable group homomorphism $\varphi\colon{A\times_{B}C \to D}$ to show that $(\alpha,\beta,\gamma)$ are admissible. We define
\[
\varphi(a,c)=\alpha kk'(a)+\gamma(c)
\]
and prove that $\varphi(a+a',c+c')=\varphi(a,c)+\varphi(a',c')$. Note that
\begin{align*}
k'(a+a') &= 1_{A}(a+a') - rf(a+a')\\
&= kk'(a)+rf(a)+kk'(a')+rf(a')-rf(a+a')\\
&= \underbrace{kk'(a)}_{\in X}+\underbrace{rf(a)+kk'(a')-rf(a)}_{\in X}.
\end{align*}
Now for all $x\in X$, $b\in B$, we have that
\[
\alpha k(r(b)+k(x)-r(b))=\beta(b)+\alpha k(x)-\beta(b).
\]
Hence, on the one hand,
\begin{align*}
& \varphi(a+a',c+c') \\
&=\alpha kk'(a+a') + \gamma(c+c')\\
&= \alpha k (kk'(a) + rf(a) + kk'(a') - rf(a)) + \gamma(c) + \gamma(c')\\
&= \alpha kk'(a) + \alpha k(r(b)+kk'(a')-r(b))) + \gamma(sg(c)+ll'(c)) + \gamma(c')\\
&= \alpha kk'(a) + \beta (b) + \alpha kk'(a') - \beta (b)+ \beta(b) + \gamma ll'(c) + \gamma(c')\\
&= \alpha kk'(a) + \beta (b) + \alpha kk'(a') +\gamma ll'(c) + \gamma(c'),
\end{align*}
where $f(a)=b=g(c)$, while on the other hand
\begin{align*}
\varphi(a',c') + \varphi(a',c') &=\alpha kk'(a) + \gamma(c) + \alpha kk'(a') + \gamma(c')\\
&= \alpha kk'(a) + \beta(b) + \gamma ll'(c) + \alpha kk'(a') + \gamma(c').
\end{align*}
Since, by assumption, $\gamma ll'(c) + \alpha kk'(a')=\alpha kk'(a') +\gamma ll'(c)$, these two expressions are equal to each other, and $\varphi$ is a homomorphism.
\end{example}

\subsection{\SSH\ is strictly stronger than \SH}
We now consider the question whether or not \SSH\ and \SH\ are equivalent. It turns out that the answer is ``no'', via the following counterexample. We first show that all arithmetical categories satisfy \SH, then we prove that the arithmetical Moore category $\HSLat$ of Heyting semilattices does not satisfy \SSH.

\subsection{All arithmetical categories satisfy \SH}
An exact Mal'tsev category is called \defn{arithmetical}~\cite{Pedicchio2, Borceux-Bourn} when every internal groupoid is an equivalence relation. This implies that the Smith commutator $[R,S]^{\Smith}$ of two equivalence relations $R$ and $S$ on an object $D$ is their intersection $R\meet S$. It is also well known~\cite{Borceux-Bourn} that if the category is, moreover, pointed, then the only abelian object is the zero object, and (assuming that binary sums exist) the Higgins commutator $[X,Y]$ of normal subobjects $X$, $Y\normal D$ is the intersection~$X\meet Y$. Since the normalisation functor preserves intersections, it follows that any pointed arithmetical category with binary sums has the \emph{Smith is Huq} property.

Two examples of this situation which are relevant to us are the category $\HSLat$ of Heyting (meet) semi-lattices and $\DLat$ of distributive lattices. The latter is only weakly Mal'tsev~\cite{NMF1, NMF3}, but it is easily seen that it fits the above picture, hence (trivially) satisfies \emph{Smith is Huq}. On the other hand, the category $\HSLat$ is semi-abelian~\cite{Jo}---in fact it even is a Moore category~\cite{Rodelo:Moore}---and satisfies the conditions of Theorem~\ref{Theorem barC implies C}, but nevertheless does not satisfy \SSH. This tells us that the conditions of Proposition~\ref{Proposition-SSH} are strictly stronger than the \emph{Smith is Huq} property. Furthermore, they are not even implied by the \emph{strong protomodularity} condition~\cite{Bourn2004}, which by definition all Moore categories satisfy.

\begin{example}\label{Example-Lattices}
A concrete counterexample is the diagram~\eqref{adm} in $\HSLat$ defined as follows: $A=D=\{0,\tfrac{1}{2},1\}$ and $B=\{0,1\}$ with the natural order and implication
\[
p\Rightarrow q \qquad=\qquad\begin{cases}
q & \text{if $p>q$}\\
1 & \text{otherwise,}
\end{cases}
\]
and~$C$ is the boolean algebra
\[
\xymatrix@!0@=2em{& 1 \ar@{-}[ld] \ar@{-}[rd] \\
a && b\\
& 0 \ar@{-}[ul] \ar@{-}[ur]}
\]
with $\neg a=b$; the tables
\begin{center}
\begin{tabular}{c|ccc}
$A$ & $0$ & $\tfrac{1}{2}$ & $1$\\[.5ex]
\hline
 $f$ & $0$ & $1$ & $1$\\
$\alpha$ & $0$ & $\tfrac{1}{2}$ & $1$
\end{tabular}
\qquad\qquad
\begin{tabular}{c|cc}
$B$ & $0$ & $1$\\
\hline
 $r$ & $0$ & $1$ \\
$s$ & $0$ & $1$\\
$\beta$ & $0$ & $1$
\end{tabular}
\qquad\qquad
\begin{tabular}{c|cccc}
$C$ & $0$ & $a$ & $b$ & $1$\\
\hline
 $g$ & $0$ & $0$ & $1$ & $1$\\
$\gamma$ & $0$ & $1$ & $1$ & $1$
\end{tabular}
\end{center}
determine the morphisms between them. Let $X=\{\tfrac{1}{2},1\}$ and $Y=\{b,1\}$ be the kernels of $f$ and $g$, respectively. Then their direct images along $\alpha$ and~$\gamma$ become $\{\tfrac{1}{2},1\}$ and $\{1\}$, which Huq-commute in $D$. On the other hand, the triple $(\alpha,\beta,\gamma)$ is not admissible with respect to $(f,r,g,s)$. Indeed, as an ordered set, the pullback of $f$ and $g$ is given by the following diagram.
\[
\xymatrix@!0@=3em{& (1,1) \ar@{-}[ld] \ar@{-}[rd] \\
(1,b) && (\tfrac{1}{2},1) \ar@{-}[dr]\\
& (\tfrac{1}{2},b) \ar@{-}[ul] \ar@{-}[ur] && (0,a)\\
&& (0,0) \ar@{-}[ul] \ar@{-}[ur]}
\]
Hence if a function $\varphi$ as in the definition of admissibility exists, then necessarily
\[
\varphi(0,a)=\varphi(rg(a),a)=\gamma(a)=1
\]
and $\varphi(\tfrac{1}{2},1)=\varphi(\tfrac{1}{2},sf(\tfrac{1}{2}))=\alpha(\tfrac{1}{2})=\tfrac{1}{2}$; but this function cannot preserve the order.
\end{example}

\subsection{Conditions in terms of weighted commutators}\label{Subsection (W)}
Given $X\leq D$ and $w\colon{W\to D}$ a morphism, we say that $X$ is \defn{$w$-normal} when \defn{$\Im(w)$ normalises~$X$}, which means that $[\Im(w),X]\leq X$. It is explained in~\cite{MM-NC} that $X$ is normal if and only if it is $1_{D}$-normal. Proposition~4.11 in~\cite{Actions} further tells us that the following are equivalent:
\begin{enumerate} 
\item $\Im(w)$ normalises $X$;
\item $X$ is normal in the join $\Im(w)\join X$;
\item $\Im(w)\join X$ acts on $X$ by conjugation.
\end{enumerate}

We call $[\Im(w),X]\join X$ the \defn{$w$--normal closure} of $X$ in $D$. In particular~\cite{Actions}, the $1_{D}$--normal closure of $D$ is the ordinary normal closure, i.e., the smallest normal subobject of $D$ containing $X$. On the other hand, the $0$--normal closure of $x$ is $x$ itself. Freely using~\cite[Proposition~2.21]{HVdL}, we may see that in general 
\begin{align*}
&[\Im(w), [\Im(w),X]\join X]\\
=\,&[\Im(w), [\Im(w),X]]\join [\Im(w),X]\join [\Im(w),[\Im(w),X],X]\\
\leq\, &[\Im(w),X]\\
\leq\, &[\Im(w),X]\join X
\end{align*}
so that the $w$--normal closure of $X$ in $D$ is indeed $w$-normal, while its minimality is clear.

Given a weighted cospan $(x,y,w)$ as in~\eqref{weighted cospan}, we shall say that the cospan $(x,y)$ is \defn{$w$-proper} if and only if the images of $x$ and $y$ are $w$-normal. That is to say,
\[
[\Im(w),\Im(x)]\leq \Im(x)\qquad \text{and}\qquad[\Im(w),\Im(y)]\leq \Im(y).
\]
So a $1_{D}$-proper cospan $(x,y)$ is nothing but a proper cospan in the sense of~\ref{def proper cospan}, while any cospan is $0$-proper.

\begin{lemma}\label{Lemma Normal Closure}
Suppose that $(x,y,w)$ is a weighted cospan as in~\eqref{weighted cospan}. Then the image of the morphism $\som{w}{x}\comp\kappa_{W,X}\colon W\flat {X\to D}$ is the $w$--normal closure of $\Im(x)$ in $D$. In particular, $\som{w}{x}\comp\kappa_{W,X}$ factors over $\Im(x)$ if and only if $x$ is $w$-proper.
\[
\xymatrix{W\flat X \ar@{{ |>}->}[r]^-{\kappa_{W,X}} \ar@{.>}[d]_-{\xi} & W+X \ar[d]^-{\som{w}{x}} \\
\Im(x) \ar@{{ >}->}[r] & D}
\]
\end{lemma}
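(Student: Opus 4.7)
My plan is to compute the image of $\som{w}{x}\comp\kappa_{W,X}$ explicitly by decomposing $W\flat X$ as a join of two subobjects and then transporting the computation through $\som{w}{x}$. Recall that $\kappa_{W,X}\colon W\flat X \to W+X$ is the kernel of $\som{1_{W}}{0}\colon W+X \to W$. The coproduct coprojection $i_{X}\colon X \to W+X$ satisfies $\som{1_{W}}{0}\comp i_{X}=0$, so it factors through $\kappa_{W,X}$ as some morphism $\tilde{\imath}_{X}\colon X\to W\flat X$. Since $\som{1_{W}}{0}$ is split by $i_{W}$, the short exact sequence
\[
0 \to W\cosmash X \xrightarrow{\tilde{\iota}_{W,X}} W\flat X \xrightarrow{\som{0}{1_{X}}\comp\kappa_{W,X}} X \to 0
\]
is itself split by $\tilde{\imath}_{X}$, where $\tilde{\iota}_{W,X}$ is the (unique) factorisation of $\iota_{W,X}\colon W\cosmash X\to W+X$ through $\kappa_{W,X}$.

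Because $\X$ is semi-abelian (hence protomodular), the kernel $\tilde{\iota}_{W,X}$ together with the section $\tilde{\imath}_{X}$ of a split short exact sequence is jointly strongly epimorphic, so we have a decomposition of subobjects of $W\flat X$,
\[
W\flat X = \Im(\tilde{\iota}_{W,X})\join \Im(\tilde{\imath}_{X}).
\]
The direct image functor $\exists_{\som{w}{x}\comp\kappa_{W,X}}\colon \mathrm{Sub}(W\flat X)\to \mathrm{Sub}(D)$ is a left adjoint (to pullback) and so preserves binary joins. Applying it to the above decomposition and using $\som{w}{x}\comp\kappa_{W,X}\comp\tilde{\iota}_{W,X}=\som{w}{x}\comp\iota_{W,X}$ and $\som{w}{x}\comp\kappa_{W,X}\comp\tilde{\imath}_{X}=\som{w}{x}\comp i_{X}=x$, we obtain
\[
\Im\bigl(\som{w}{x}\comp\kappa_{W,X}\bigr)=\Im\bigl(\som{w}{x}\comp\iota_{W,X}\bigr)\join \Im(x) =[\Im(w),\Im(x)]\join \Im(x),
\]
the last equality being the definition of the Higgins commutator recalled in Subsection~2.3 (and the standard fact that $[K,L]$ depends only on the images $\Im(k)$, $\Im(l)$). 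This last expression is, by definition, the $w$--normal closure of $\Im(x)$ in $D$, proving the first claim.

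For the ``in particular'' assertion, $\som{w}{x}\comp\kappa_{W,X}$ factors through $\Im(x)\hookrightarrow D$ precisely when $\Im(\som{w}{x}\comp\kappa_{W,X})\leq \Im(x)$, which by what we just proved is equivalent to $[\Im(w),\Im(x)]\join \Im(x)\leq \Im(x)$, i.e., $[\Im(w),\Im(x)]\leq \Im(x)$; this is exactly the condition that $x$ be $w$-proper. The main delicate point is the split exact sequence and the resulting join decomposition of $W\flat X$; once this is established, preservation of joins by direct image, together with the known identification of the Higgins commutator as the image of $\som{w}{x}\comp\iota_{W,X}$, makes the rest routine.
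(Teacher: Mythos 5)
Your proof is correct and follows essentially the same route as the paper: both rest on the split short exact sequence $0\to W\cosmash X\to W\flat X\to X\to 0$ (your section $\tilde{\imath}_{X}$ is the paper's $\eta_{X}^{W}$), the fact that in a protomodular category its kernel and section are jointly strongly (regular) epic, and the resulting identity $\Im(\som{w}{x}\comp\kappa_{W,X})=[\Im(w),\Im(x)]\join\Im(x)$. You merely spell out in more detail the join decomposition of $W\flat X$ and the preservation of joins by direct images, which the paper leaves implicit.
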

\begin{proof}
The inclusions in the split short exact sequence 
\[
\xymatrix@C=4em{0 \ar[r] & W\cosmash X \ar@{{ |>}->}[r]^-{\iota_{W,X}} & W\flat X \ar@<-.5ex>@{-{ >>}}[r]_-{\som{0}{1_{X}}\circ\kappa_{W,X}} & X \ar[r] \ar@<-.5ex>@{{ >}->}[l]_-{\eta_{X}^{W}} & 0}
\]
are jointly regular epic, which implies that 
\begin{equation*}
\Im(\som{w}{x}\comp\kappa_{W,X})=\Im(\som{w}{x}\comp\iota_{W,X})\join \Im(x)= [\Im(w),\Im(x)]\join \Im(x).
\end{equation*}
In other words, the unique dotted lifting in the diagram
\[
\xymatrix{(W\diamond X)+ X \ar@{-{ >>}}[r]^-{\som{\iota_{W,X}}{\eta_{X}^{W}}} \ar@{-{ >>}}[d] & W\flat X \ar@{.>}[ld] \ar[d]^-{\som{w}{x}\circ\kappa_{W,X}} \\
[\Im(w),\Im(x)]\join \Im(x) \ar@{{ >}->}[r] & D}
\]
is necessarily a regular epimorphism. The morphism $x$ being $w$-proper means precisely that the $w$--normal closure of $\Im(x)$ in $D$ is $\Im(x)$ itself.
\end{proof}

\begin{lemma}\label{Lemma w-Normal Cospan}
Suppose that $(x,y,w)$ is a weighted cospan as in~\eqref{weighted cospan}. If $(x,y)$ is a $w$-proper cospan, then
\[
\som{w}{x}\comp\kappa_{W,X}\colon W\flat {X\to D} \qquad\text{and}\qquad \som{w}{y}\comp\kappa_{W,Y}\colon W\flat {Y\to D}
\]
Huq-commute if and only if do $x$ and $y$.
\end{lemma}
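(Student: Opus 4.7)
The plan is to use the fact, already invoked in the proof of Proposition~\ref{Proposition-SH}, that Huq-commutation of two morphisms with common codomain depends only on their regular images (Borceux--Bourn, Proposition~1.6.4). So the task reduces to identifying the images of $\som{w}{x}\comp\kappa_{W,X}$ and $\som{w}{y}\comp\kappa_{W,Y}$ and showing they coincide with $\Im(x)$ and $\Im(y)$, respectively.

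First I would apply Lemma~\ref{Lemma Normal Closure} to the weighted cospan, which gives that the image of $\som{w}{x}\comp\kappa_{W,X}$ is the $w$--normal closure $[\Im(w),\Im(x)]\join\Im(x)$ of $\Im(x)$ in $D$, and similarly for $y$. The assumption that $(x,y)$ is a $w$-proper cospan says precisely that
\[
[\Im(w),\Im(x)]\leq \Im(x) \qquad\text{and}\qquad [\Im(w),\Im(y)]\leq\Im(y),
\]
so these $w$--normal closures collapse to $\Im(x)$ and $\Im(y)$ themselves. Equivalently, as stated in the second sentence of Lemma~\ref{Lemma Normal Closure}, the morphisms $\som{w}{x}\comp\kappa_{W,X}$ and $\som{w}{y}\comp\kappa_{W,Y}$ factor through $\Im(x)\hookrightarrow D$ and $\Im(y)\hookrightarrow D$ as regular epimorphisms followed by the normal monomorphic inclusions.

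Combining these two observations, $\som{w}{x}\comp\kappa_{W,X}$ and $\som{w}{y}\comp\kappa_{W,Y}$ have the same regular images as $x$ and $y$, respectively. Invoking once more the invariance of Huq-commutativity under taking regular images, the former pair Huq-commutes in $D$ if and only if $\Im(x)$ and $\Im(y)$ do, if and only if $x$ and $y$ do. This concludes the argument; I do not foresee any genuine obstacle, since each step is a direct application of Lemma~\ref{Lemma Normal Closure} together with the stated standard fact about commutators being determined by images.
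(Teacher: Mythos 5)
Your argument is correct and is essentially identical to the paper's own proof: both identify the images of $\som{w}{x}\comp\kappa_{W,X}$ and $\som{w}{y}\comp\kappa_{W,Y}$ as $\Im(x)$ and $\Im(y)$ via Lemma~\ref{Lemma Normal Closure} and $w$-properness, then conclude by the fact that Huq-commutation depends only on regular images. You have merely spelled out the intermediate step (the collapse of the $w$--normal closures) that the paper leaves implicit.
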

\begin{proof}
By Lemma~\ref{Lemma Normal Closure}, these morphisms have $\Im(x)$ and $\Im(y)$ as their respective images. The result now follows form~\cite[Proposition~1.6.4]{Borceux-Bourn}.
\end{proof}

\begin{theorem}\label{Theorem-weighted-SSH}
In any semi-abelian category $\X$, the following are equivalent:
\begin{enumerate}
\item the condition~\SSH;
\item for any weighted cospan $(x,y,w)$ such that the cospan $(x,y)$ is $w$-proper, the morphisms $x$ and $y$ commute over $w$ as soon as they Huq-commute;
\item for any cospan $(x,y)$, the property of commuting over $w$ is independent of the chosen weight $w$ for which the cospan $(x,y)$ is $w$-proper;
\item for any weighted cospan $(x,y,w)$ such that the cospan $(x,y)$ is $w$-proper, if $[\Im(x),\Im(y)]=0$ then $[\Im(x),\Im(y),\Im(w)]=0$.
\end{enumerate}
\end{theorem}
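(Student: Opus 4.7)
My plan is to mimic the structure of Proposition~\ref{Proposition-SH}, using Lemmas~\ref{Lemma Normal Closure} and~\ref{Lemma w-Normal Cospan} to lift the arguments from \emph{proper} to \emph{$w$-proper} cospans. I would establish (i)~$\iff$~(ii), (ii)~$\iff$~(iii), and (ii)~$\iff$~(iv).

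For (i)~$\Rightarrow$~(ii), I would recall from subsection~\ref{weighted commutation} that $x$ and $y$ commute over $w$ precisely when the cospan~\eqref{cospan-weight} Huq-commutes in $\Pt_{W}(\X)$, and that the kernel functor $\Ker\colon\Pt_{W}(\X)\to\X$ sends this cospan to the pair $\bigl(\som{w}{x}\comp\kappa_{W,X},\som{w}{y}\comp\kappa_{W,Y}\bigr)$. By Lemma~\ref{Lemma w-Normal Cospan}, in the $w$-proper case this pair Huq-commutes in $\X$ if and only if $x$ and $y$ do; so, assuming \SSH\ in the form of Proposition~\ref{Proposition-SSH}~(ii), Huq-commutation of $x$ and $y$ lifts to commutation over $w$. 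For the converse (ii)~$\Rightarrow$~(i), I would aim at \SSH\ as given by Proposition~\ref{Proposition-SSH}~(iii): given an admissibility diagram~\eqref{adm} in which $\alpha\comp\ker(f)$ and $\gamma\comp\ker(g)$ Huq-commute, I set $x=\alpha\comp\ker(f)$, $y=\gamma\comp\ker(g)$ and $w=\beta$. The cospan $(x,y)$ is $\beta$-proper: $\Im(\beta)=\alpha(r(B))\leq\alpha(A)$ while $\alpha(\Ker f)$ is normal in $\alpha(A)$ as a regular image of a normal subobject along a regular epimorphism, hence $[\Im(\beta),\Im(x)]\leq[\alpha(A),\alpha(\Ker f)]\leq\Im(x)$, and symmetrically for $y$. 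Condition~(ii) then yields commutation over $\beta$, which by subsection~\ref{weighted commutation} is precisely admissibility of $(\alpha,\beta,\gamma)$ with respect to $(f,r,g,s)$.

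The equivalence (ii)~$\iff$~(iii) is formal. Any cospan is $0$-proper, and commuting over the zero weight $0\colon 0\to D$ coincides with Huq-commutation via $\Pt_{0}(\X)\simeq\X$; so (iii) specialised to the weights $0$ and $w$ yields (ii). Conversely, if $(x,y)$ is both $w_{1}$- and $w_{2}$-proper and commutes over $w_{1}$, then $x$ and $y$ Huq-commute (commutation over any weight trivially implies Huq-commutation), and (ii) applied with weight $w_{2}$ gives commutation over $w_{2}$.

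Finally, (ii)~$\iff$~(iv) would reduce to the $w$-proper analogue of~\cite[Theorem~1]{MFVdL4}: for $(x,y,w)$ with $(x,y)$ being $w$-proper, $x$ and $y$ commute over $w$ if and only if $[\Im(x),\Im(y)]=0=[\Im(x),\Im(y),\Im(w)]$. Granted this, both (ii) and~(iv) reduce to the single implication ``$[\Im(x),\Im(y)]=0\Rightarrow[\Im(x),\Im(y),\Im(w)]=0$'' and so coincide. The main obstacle is establishing this extended characterisation, since the proof in the proper case exploits that $\Im(x)$ and $\Im(y)$ are normal in the whole of $D$, whereas here one has only normality inside the join $\Im(w)\join\Im(x)\join\Im(y)$. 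Nevertheless, the decomposition formula $[K,L\join M]=[K,L]\join[K,M]\join[K,L,M]$ applied to this join, combined with Lemmas~\ref{Lemma Normal Closure} and~\ref{Lemma w-Normal Cospan} to identify the image under the kernel functor of the cospan~\eqref{cospan-weight} as a cospan in $D$, should suffice to adapt the argument of~\cite[Theorem~1]{MFVdL4} to this slightly broader setting.
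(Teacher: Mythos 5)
Your plan coincides with the paper's proof on three of the four equivalences. For (i)~$\Leftrightarrow$~(ii) you argue exactly as the paper does: the kernel functor $\Pt_{W}(\X)\to\X$ sends the cospan~\eqref{cospan-weight} to a pair whose images are, by Lemma~\ref{Lemma Normal Closure}, the $w$--normal closures of $\Im(x)$ and $\Im(y)$, so that Lemma~\ref{Lemma w-Normal Cospan} translates between Huq-commutation of $(x,y)$ and of that pair; your explicit verification that the cospan $(\alpha\comp\ker(f),\gamma\comp\ker(g))$ arising from~\eqref{adm} is $\beta$-proper is a detail the paper leaves implicit, and your argument for it (direct images of normal subobjects along regular epimorphisms are normal, plus monotonicity of the Higgins commutator) is correct. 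The equivalence (ii)~$\Leftrightarrow$~(iii) via the zero weight is also precisely the paper's one-line argument.

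The one genuine gap is the equivalence with~(iv). You reduce it to the claim that for a $w$-proper cospan, $x$ and $y$ commute over $w$ if and only if $[\Im(x),\Im(y)]=0=[\Im(x),\Im(y),\Im(w)]$, and you only gesture at a proof (``should suffice to adapt the argument''). This is not a routine adaptation: commutation over $w$ is Huq-commutation of~\eqref{cospan-weight} in $\Pt_{W}(\X)$, which the kernel functor does \emph{not} reflect in general---that failure is the very content of~\SSH---so you cannot simply compute the relevant commutator after applying $\Ker$ and invoke the binary decomposition formula; the whole point is to identify the obstruction living in $\Pt_{W}(\X)$ with the ternary commutator $[\Im(x),\Im(y),\Im(w)]$ in $\X$. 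The statement you need is exactly \cite[Theorem~2]{MFVdL4}, the decomposition formula for the weighted commutator, which holds for arbitrary weighted cospans and from which the paper deduces (iii)~$\Leftrightarrow$~(iv) as a direct consequence. With that citation your argument closes; without it, the last equivalence remains unproven.
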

\begin{proof}
Since $x$ and $y$ Huq-commute if and only if they commute over zero~\cite{GJU}, it is clear that (ii) and (iii) are equivalent. The equivalence between (iii) and (iv) is a direct consequence of~\cite[Theorem~2]{MFVdL4}.

(ii) \implies~(i) via Condition~(iii) in~Proposition~\ref{Proposition-SSH} as explained in Subsection~\ref{weighted commutation} above. To prove the converse, consider a weighted cospan $(x,y,w)$ such that the cospan $(x,y)$ is $w$-proper. The image through the kernel functor of the induced cospan~\eqref{cospan-weight} in $\Pt_{W}(\X)$ will commute if and only if $\som{w}{x}\comp \ker\som{1_{W}}{0}\colon {W\flat X\to D}$ Huq-commutes with $\som{w}{y}\comp \ker\som{1_{W}}{0}\colon {W\flat Y\to D}$ in $\X$. By Lemma~\ref{Lemma w-Normal Cospan}, this is equivalent to $x$ and $y$ Huq-commuting in $\X$. Hence (ii) follows from Condition~(ii) in~Proposition~\ref{Proposition-SSH}.
\end{proof}

Part of the above argument is worth repeating for later use:

\begin{lemma}\label{Lemma (x,y,w) commutes}
If $(x,y,w)$ is a weighted cospan as in~\eqref{weighted cospan} and $x$ and $y$ commute over $w$, then the $w$--normal closures of $\Im(x)$ and $\Im(y)$ in $D$ Huq-commute.
\end{lemma}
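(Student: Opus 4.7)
The plan is to extract from the proof of Theorem~\ref{Theorem-weighted-SSH} the ``easy half'' that does not rely on~\SSH, namely the fact that commutation over~$w$ already forces Huq-commutation of the two morphisms $\som{w}{x}\comp\kappa_{W,X}$ and $\som{w}{y}\comp\kappa_{W,Y}$ whose images (by Lemma~\ref{Lemma Normal Closure}) are precisely the $w$--normal closures of $\Im(x)$ and $\Im(y)$ in $D$. No $w$-properness assumption on $(x,y)$ is needed, because passing to $w$--normal closures corresponds exactly to replacing $x$ by $\som{w}{x}\comp\kappa_{W,X}$ and $y$ by $\som{w}{y}\comp\kappa_{W,Y}$ on the nose.

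First I would translate the hypothesis via Subsection~\ref{weighted commutation}: saying that $x$ and $y$ commute over $w$ means precisely that the cospan~\eqref{cospan-weight} Huq-commutes in $\Pt_{W}(\X)$. Next, I would apply the kernel functor $\Ker\colon\Pt_{W}(\X)\to\X$, which \emph{preserves} Huq-commutation (as a left exact functor between pointed protomodular categories, it carries the defining cooperator into a cooperator). Since the kernel of $\som{1_{W}}{0}\colon W+X\to W$ is $\kappa_{W,X}\colon W\flat X\to W+X$, and likewise for $Y$, applying $\Ker$ to the cospan~\eqref{cospan-weight} yields that
\[
\som{w}{x}\comp\kappa_{W,X}\colon W\flat X \to D \qquad \text{and}\qquad \som{w}{y}\comp\kappa_{W,Y}\colon W\flat Y \to D
\]
Huq-commute in~$\X$.

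Finally, by Lemma~\ref{Lemma Normal Closure}, the regular image of $\som{w}{x}\comp\kappa_{W,X}$ is the $w$--normal closure $[\Im(w),\Im(x)]\join\Im(x)$ of $\Im(x)$ in $D$, and analogously for $y$. An appeal to \cite[Proposition~1.6.4]{Borceux-Bourn}---two morphisms Huq-commute if and only if their regular images do---then yields that these two $w$--normal closures Huq-commute in~$\X$, as desired.

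There is no real obstacle; the only point worth being careful about is that the identification of $\Ker\som{1_{W}}{0}$ with $\kappa_{W,X}$ is used implicitly, and that the preservation of Huq-commutation by the kernel functor is the direction of Proposition~\ref{Proposition-SSH}(ii) that holds unconditionally (it is the reflection that requires~\SSH). Because all of this was already established in the argument for Theorem~\ref{Theorem-weighted-SSH}, the proof amounts to little more than isolating the relevant three lines.
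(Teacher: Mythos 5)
Your proposal is correct and is essentially the paper's own argument: the paper's (very terse) proof likewise observes that, by Lemma~\ref{Lemma Normal Closure}, the $w$--normal closures are exactly the images through the kernel functor of the Huq-commuting cospan~\eqref{cospan-weight} in $\Pt_{W}(\X)$, relying implicitly on the same two facts you make explicit (preservation of Huq-commutation by the kernel functor, and commutation being detected on regular images). You have merely unpacked the three lines the authors chose to leave implicit.
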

\begin{proof}
By Lemma~\ref{Lemma Normal Closure}, the $w$--normal closures in question are the respective images through the kernel functor of the induced cospan~\eqref{cospan-weight} in $\Pt_{W}(\X)$.
\end{proof}

\section{When is the weighted commutator independent\\ of the chosen weight?}\label{Section W}

The rather subtle condition on weighted cospans needed in Theorem~\ref{Theorem-weighted-SSH} naturally leads to the question when, for \emph{all} weighted cospans, weighted commutativity is independent of the chosen weight. It it known that this does not even happen in the category of groups (see Example~\ref{Example Groups W} and also~\cite{GJU} where further examples are given); it will, however, do happen in any so-called \emph{two-nilpotent} semi-abelian category. 

Essentially repeating the proof of Theorem~\ref{Theorem-weighted-SSH}, we find:

\begin{proposition}\label{Proposition-W}
In any semi-abelian category $\X$, the following conditions are equivalent:
\begin{enumerate}
\item for any weighted cospan $(x,y,w)$, the morphisms $x$ and $y$ commute over $w$ as soon as they Huq-commute;
\item for any given cospan $(x,y)$, the property of commuting over $w$ is independent of the chosen weight $w$ making $(x,y,w)$ a weighted cospan in $\X$;
\item for any given weighted cospan $(x,y,w)$ in $\X$, if $[\Im(x),\Im(y)]=0$ then $[\Im(x),\Im(y),\Im(w)]=0$;
\item any of the above, restricted to cospans of monomorphisms.\noproof
\end{enumerate}
\end{proposition}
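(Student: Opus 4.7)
The plan is to mimic the proof of Theorem~\ref{Theorem-weighted-SSH} verbatim, simply dropping the $w$-properness hypothesis throughout. The structure should therefore be: (i)$\Leftrightarrow$(ii) by playing the zero weight against an arbitrary one; (ii)$\Leftrightarrow$(iii) by invoking the general Higgins-commutator characterisation of weighted commutation from~\cite[Theorem~2]{MFVdL4}; and (iv) by passing to regular images.

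First I would handle (i)$\Leftrightarrow$(ii). Recall from~\cite{GJU} that $x$ and $y$ Huq-commute if and only if they commute over the zero weight $0 \to D$. Thus (i) rephrases as ``commuting over $0$ implies commuting over any $w$''. In the converse direction, any weighted commutation implies Huq-commutation; indeed, from the cospan~\eqref{cospan-weight} in $\Pt_{W}(\X)$ one recovers Huq-commutation of $x$ and $y$ in $\X$ by further applying the change of base functor $\Pt_{W}(\X)\to \X$, which \emph{preserves} Huq-commuting pairs. Combined, these two observations give independence of the chosen weight, yielding~(ii); and (ii) trivially yields~(i).

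Next I would establish (ii)$\Leftrightarrow$(iii). Since for any cospan $(x,y)$ the property $[\Im(x),\Im(y)]=0$ is precisely Huq-commutation, hence commutation over the zero weight, condition~(ii) is equivalent to: whenever $[\Im(x),\Im(y)]=0$, $x$ and $y$ commute over every weight $w$. The general Higgins-commutator characterisation of weighted commutation in~\cite[Theorem~2]{MFVdL4} then translates the latter condition into vanishing of the ternary Higgins commutator $[\Im(x),\Im(y),\Im(w)]$, i.e.\ condition~(iii). This is the step that reuses the same tool as in the proof of Theorem~\ref{Theorem-weighted-SSH} (where equivalence between (iii) and (iv) of that theorem was obtained the same way).

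Finally, for~(iv), one uses that Huq-commutation, weighted commutation, and all Higgins commutators only depend on the regular images of the morphisms involved~\cite[Proposition~1.6.4]{Borceux-Bourn}: each of (i)--(iii) is equivalent to its restriction to cospans of monomorphisms. The main (minor) obstacle is simply checking that the characterisation from~\cite[Theorem~2]{MFVdL4} applies without the $w$-properness assumption once one already assumes $[\Im(x),\Im(y)]=0$; intuitively, under this hypothesis the only remaining obstruction to weighted commutation is the ternary commutator involving $\Im(w)$, which is exactly what (iii) controls. No new technology beyond what was developed for Theorem~\ref{Theorem-weighted-SSH} is required.
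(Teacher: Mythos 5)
Your proof is correct and coincides with the paper's: the paper gives no separate argument for this proposition, remarking only that one obtains it by ``essentially repeating the proof of Theorem~\ref{Theorem-weighted-SSH}'' with the $w$-properness hypothesis dropped, and your write-up spells out exactly that repetition --- (i)$\Leftrightarrow$(ii) by playing the zero weight against an arbitrary one, (ii)$\Leftrightarrow$(iii) via the Higgins-commutator characterisation of weighted commutation from~\cite[Theorem~2]{MFVdL4}, and (iv) by passing to regular images. The single point you flag, namely that this characterisation must be available without the $w$-properness assumption, is precisely what the paper's ``essentially repeating'' also takes for granted, so nothing further is required.
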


\begin{definition}
We say that $\X$ satisfies \defn{condition \W} when it satisfies the equivalent conditions of Proposition~\ref{Proposition-W}.
\end{definition}

The category $\Gp$ of groups does not satisfy \W, as shows the following example.

\begin{example}\label{Example Groups W}
Consider the cyclic group of order two $C_{2}$ as a subgroup of the symmetric group on three elements $S_{3}$ via a monomorphism $x\colon{C_{2}\to S_{3}}$. Then $x$ commutes with $x$ over $0$ since $C_{2}$ is abelian. \W\ in the guise of Condition~(i) in Proposition~\ref{Proposition-W} would imply that $x$ commutes with $x$ over $1_{S_{3}}$, which gives us a contradiction with $S_{3}$ being non-abelian. Indeed, the normal closure of $x$ is~$1_{S_{3}}$ itself, so that we would have $[S_{3},S_{3}]=0$ by Lemma~\ref{Lemma (x,y,w) commutes}.
\end{example}

As an example of a non-abelian category satisfying \W\ we find the semi-abelian variety $\Nil_{2}(\Gp)$ of groups of nilpotency class $2$, in which \emph{all} ternary commutators vanish, because $[[X,X],X]$ and $[X,X,X]$ coincide in the category $\Gp$. This is clearly true in general once we adopt the following definition due to Manfred Hartl~\cite{CCC}. 

\begin{definition}
A semi-abelian category is called \defn{two-nilpotent} if all ternary co-smash products in it are trivial.
\end{definition}

This condition is equivalent to all ternary Higgins commutators vanishing. It is clear that such categories (trivially) satisfy (iii) in Proposition~\ref{Proposition-W}, so \W\ holds in all two-nilpotent categories. Examples include categories of modules over a square ring, and in particular categories of algebras over a nilpotent algebraic operad of class two~\cite{BHP}.

\section{Overview}

\noindent
\resizebox{\textwidth}{!}
{\begin{tabular}{cccccc}
\hline
\txt{condition}  & \txt{weighted commutator\\ independent of\\ chosen weight $w$ for} & \multicolumn{2}{c}{\txt{inverse image functors\\ reflect commutation of}} & \txt{counterexample} & \txt{holds for}\\
\hline\\
\W & all cospans & --- & --- & $\Gp$ & two-nilpotent categories\\\\
\SSH & $w$-proper cospans & \txt{all cospans} & --- & $\HSLat$ & \LACC\ categories\\\\
\SH & proper cospans & proper cospans & equivalence relations & $\Loop$, $\DiGp$ & \emph{categories of interest}\\\\
\hline
\end{tabular}}

%\bibliography{tim}
%\bibliographystyle{amsplain}

\providecommand{\noopsort}[1]{}
\providecommand{\bysame}{\leavevmode\hbox to3em{\hrulefill}\thinspace}
\providecommand{\MR}{\relax\ifhmode\unskip\space\fi MR }
% \MRhref is called by the amsart/book/proc definition of \MR.
\providecommand{\MRhref}[2]{%
  \href{http://www.ams.org/mathscinet-getitem?mr=#1}{#2}
}
\providecommand{\href}[2]{#2}

\end{document}